\newtheorem{lemma}{Lemma}[section]
\newtheorem{theorem}[lemma]{Theorem}
\newtheorem{corollary}[lemma]{Corollary}
\newtheorem{claim}[]{\noindent Claim}
\begin{document}

\setstretch{1.35} 
	
\title{Perfect divisions in ($P_2 \cup P_4$, bull)-free graphs}

{\author{Lizhong Chen\footnote{Email: lchendh@connect.ust.hk} \quad Hongyang Wang\footnote{Corresponding Author. Email: hwanghj@connect.ust.hk}}
			
{\affil{Department of Mathematics,\\ Hong Kong University of Science and Technology, \\Clear Water Bay, Hong Kong}

\date{\today}
\maketitle

\begin{abstract}
A graph $G$ has a perfect division if its vertex set can be partitioned into two sets $A$, $B$ such that $G[A]$ is perfect and $\omega(G[B]) < \omega(G)$. We call $G$ perfectly divisible if every induced subgraph of $G$ admits a perfect division. We prove that every ($P_2 \cup P_4$, bull)-free graph $G$ with $\omega(G) \geq 3$ has a perfect division if $G$ contains no homogeneous set. The clique-number condition is tight: a counterexample exists for $\omega(G) = 2$. Additionally, we present a short proof of the perfect divisibility of ($P_5$, bull)-free graphs, originally established by Chudnovsky and Sivaraman~\cite{CHUDPD2D}.
\end{abstract}

\textbf{Mathematics Subject Classification}: 05C15, 05C17, 05C69
				
\textbf{Keywords}: Graph colouring; Perfect divisibility; Bull-free graphs; $P_2\cup P_4$-free graphs.

\section{Introduction}

All graphs considered in this paper are finite and simple. Let $G$ be a graph with vertex set $V(G)$ and edge set $E(G)$. Two vertices $u,v\in V(G)$ are adjacent if and only if $uv \in E(G)$. We denote by $u\sim v$ if $u,v$ are adjacent and $u\nsim v$ if otherwise. Let $d(v)$ denote the number of vertices adjacent to $v$. The complement $G^c$ of $G$ is the graph with vertex set $V(G)$ and $uv \in G^c$ if and only if $uv \notin G$. For two graphs $H$ and $G$, $H$ is an \emph{induced subgraph} of $G$ if $V(H)\subseteq V(G)$, and $uv \in E(H)$ if and only if $uv \in E(G)$. If $G$ has an induced subgraph isomorphic to $H$, then we say that $G$ \emph{contains} $H$, otherwise $G$ is said to be $H$-\emph{free}. For a family of graphs $\{H_1,H_2,\ldots\}$, we say that $G$ is $(H_1,H_2,\ldots)$-free if G is $H$-free for every $H\in\{H_1,H_2,\ldots\}$. We denote by $G[X]$ the induced subgraph of $G$ with vertex set $X\subseteq V(G)$. For two vertex-disjoint graph $G_1$ and $G_2$, the \emph{union} $G_1\cup G_2$ is the graph with $V(G_1\cup G_2)=V(G_1)\cup V(G_2)$ and $E(G_1\cup G_2)=E(G_1)\cup E(G_2)$.

A clique $K_n$ is a graph on $n$ vertices such that every two vertices of $K_n$ are adjacent. For an integer $k\geq1$, $P_k$ and $C_k$ denote the path on $k$ vertices and the cycle on $k$ vertices, respectively. A \emph{path} in a graph is a sequence $p_1-\cdots-p_k$ of distinct vertices such that $p_i \sim p_j$ if and only if $|i-j|=1$. A \emph{cycle} in a graph is a sequence $c_1-\cdots-c_k-c_1$ of distinct vertices such that $c_i\sim c_j$ if and only if $|i-j|=\{1,k-1\}$. A \emph{hole} is an induced subgraph that is isomorphic to the cycle $C_k$ with $k\geq4$, and $k$ is the \emph{length} of the hole. A hole is \emph{odd} if $k$ is \emph{odd}, and \emph{even} otherwise. An \emph{antihole} in a graph is an induced subgraph that is isomorphic to $C^c_k$ with $k\geq4$, and $k$ is the length of the antihole. An antihole is \emph{odd} if $k$ is odd, and \emph{even} otherwise. The \emph{bull} is a graph with vertex set $\{x_1,x_2,x_3,y,z\}$ and edge set $\{x_1x_2,x_1x_3,x_2x_3,x_1y,x_2z\}$, i.e. a triangle with two disjoint pendant edges.

The \emph{clique number} of $G$, denoted by $\omega(G)$, is the number of vertices in a largest clique of $G$. A graph G is called \emph{k-colourable}, if its vertices can be coloured with $k$ colours such that adjacent vertices receive distinct colours. The smallest integer $k$ such that a given graph $G$ is $k$-colourable is called its \emph{chromatic number}, denoted by $\chi(G)$. A graph $G$ is \emph{perfect} if we have $\chi(H)=\omega(H)$ for every induced subgraph $H$ of $G$. Gy\'{a}rf\'{a}s \cite{GY} introduced the notion of $\chi$-boundedness as a natural extension of perfect graphs in 1970. A class of graphs $\mathcal{G}$ is $\chi$-\emph{bounded} if there exists a real-valued function $f$ such that $\chi(G)\leq f(\omega(G))$ holds for every graph $G\in \mathcal{G}$, and $f$ is called a \emph{binding function}. Clearly, the class of perfect graphs is $\chi$-$bounded$ with $f(x)=x$ as the $\chi$-binding function. 

\input{collection.tpx}

Ho\`ang~\cite{HOANGBANNER} introduced the concept of perfect divisibility in 2018. A graph $G$ has a \emph{perfect division} if its vertex set can be partitioned into sets $A$ and $B$ such that $G[A]$ is perfect and $\omega(G[B]) < \omega(G)$. $G$ is said to be \emph{perfectly divisible} if every induced subgraph admits a perfect division. Evidently, a graph is perfectly divisible if it is perfect. A nice feature of perfect divisibility as pointed out by Chudnovsky and Sivaraman~\cite{CHUDPD2D} is that we can have a quadratic binding function $\binom{\omega(G)+1}{2}$ for a perfectly divisible graph. 

Recently, there has been much research on the perfect divisibility of graphs. For instance, Ho\`ang~\cite{HOANGBANNER} proved the perfect divisibility of (banner, odd hole)-free graphs and conjectured that all odd hole-free graphs are perfectly divisible; Chudnovsky and Sivaraman~\cite{CHUDPD2D} proved the perfect divisibility of (odd hole, bull)-free graphs and ($P_5$, bull)-free graphs; Deng and Chang~\cite{DENG} proved that every ($P_2\cup P_3$, bull)-free graph $G$ with $\omega(G)\geq3$ has a perfect division if $G$ admits no homogeneous set.

In this paper, we prove that every ($P_2\cup P_4$, bull)-free graph with $\omega(G)\geq3$ has a perfect division if $G$ admits no homogeneous set, generalising the result of Deng and Chang~\cite{DENG}. 

\begin{theorem}\label{THM1}
    If $G$ is a $(P_2\cup P_4,\mbox{bull})$-free graph with $\omega(G)\geq3$, then either $G$ contains a homogeneous set or $G$ has a perfect division.
\end{theorem}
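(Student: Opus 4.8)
The plan is to argue by induction on $|V(G)|$, so that it suffices to exhibit, for $G$ itself, a partition $(A,B)$ of $V(G)$ with $G[A]$ perfect and $\omega(G[B])<\omega(G)$: proper induced subgraphs are again in the class and hence handled by the induction hypothesis (the ones of clique number at most $2$, being triangle-free, requiring a short separate inspection). Here I write $N(v)$ for the neighbourhood of a vertex $v$ and $N[v]=N(v)\cup\{v\}$. Two easy reductions come first. If $G$ is disconnected we are done, because a disjoint union is perfectly divisible once each component is, components of clique number below $\omega(G)$ being absorbed into $B$; dually, if $G^{c}$ is disconnected we are done, since the join of two perfect graphs is perfect while the two $B$-parts add their clique numbers and each drops by at least one. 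Next, if $G$ has a homogeneous set $M$ with $1<|M|<|V(G)|$, then $G[M]$ and the contraction $G/M$ — the latter being an induced subgraph of $G$ — are smaller members of the class, to which the induction hypothesis applies; one recombines their partitions using Lov\'asz's replacement lemma (substituting a perfect graph into a perfect graph yields a perfect graph) together with a short bookkeeping of clique numbers, according to whether the contracted vertex ends up in the perfect part, to obtain a good partition of $G$. This homogeneous-set step is exactly what lets us drop the extra hypothesis in the theorem of Deng and Chang.

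It remains to treat a graph $G$ in the class that is connected, co-connected, has no homogeneous set, is not perfect, and has $\omega(G)\ge 3$. The engine is the following reformulation of $(P_2\cup P_4)$-freeness: \emph{for every edge $uv$ of $G$ the graph $G-(N[u]\cup N[v])$ contains no induced $P_4$, so it is a cograph and in particular perfect} (an induced $P_4$ in it, together with the edge $uv$, would be an induced $P_2\cup P_4$). Thus a perfect piece sitting far from an edge comes for free, and the real task is to make the complementary piece, carved out of $N[u]\cup N[v]$, miss some maximum clique. I would anchor the analysis at a maximum clique $K$ (nonempty, $|K|\ge 3$): no vertex outside $K$ is complete to $K$, so each such vertex $x$ has a proper trace $K\cap N(x)\subsetneq K$, and bull-freeness sharply constrains these traces and their interaction — for any triangle $T$ of a bull-free graph the sets of vertices pendant at the three vertices of $T$ (those adjacent to exactly one vertex of $T$) are pairwise complete to one another, and, the bull being self-complementary, the mirror statement holds in $G^{c}$. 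One also uses that a $(P_2\cup P_4)$-free graph has no hole of length at least $8$, and that, in the absence of a homogeneous set and with $\omega(G)\ge 3$, every induced $C_5$ has a vertex outside it that is adjacent to some but not all of it (otherwise the $C_5$ would be homogeneous). The goal is to locate a vertex $v$ — or an edge, or the union of $K$ with a small controlled set — whose closed neighbourhood both induces a perfect graph (note that $G[N[v]]$ is the join of $v$ with $G[N(v)]$) and meets every maximum clique; then $A:=N[v]$ and $B:=V(G)\setminus A$ finish the argument.

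The crux, carrying essentially all of the work, is this last step: one must understand how an arbitrary vertex can attach simultaneously to a maximum clique and to an induced copy of a non-perfect obstruction (a $C_5$, a $C_7$, or an odd antihole compatible with the forbidden subgraphs), and then use the triangle-pendant constraints of bull-freeness together with the cograph reformulation to exclude the configuration in which every candidate perfect set leaves some maximum clique untouched. I expect the homogeneous-set-free hypothesis to be invoked repeatedly to promote ``almost every vertex is well-behaved'' to ``every vertex is well-behaved'', and the hypothesis $\omega(G)\ge 3$ to be precisely what provides enough room inside $K$ to relegate a witness of local non-perfection to $B$ — in keeping with the known failure of the statement for $\omega(G)=2$. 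The same two ingredients — the edge/cograph reformulation and the pendant structure of triangles in bull-free graphs, with the structural features of $P_5$-free graphs taking over the role of $(P_2\cup P_4)$-freeness — should also yield the promised short proof of perfect divisibility for $(P_5,\mbox{bull})$-free graphs.
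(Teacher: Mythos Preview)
Your proposal is a plan rather than a proof: you explicitly defer ``the crux, carrying essentially all of the work'' to expectations (``I expect\dots'', ``should also yield\dots''), and none of the case analysis is actually carried out. Worse, the one concrete strategy you sketch cannot succeed. You correctly observe that for any edge $uv$ the set $V(G)\setminus(N[u]\cup N[v])$ induces a cograph and is therefore perfect; but for its complement $N[u]\cup N[v]$ to satisfy $\omega<\omega(G)$, every maximum clique must contain a vertex nonadjacent to both $u$ and $v$ --- in particular, neither $u$ nor $v$ may lie in a maximum clique. In a minimal counterexample \emph{every} vertex lies in a maximum clique (Lemma~\ref{CLIQUE}), so no edge works. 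You then switch, without transition, to seeking a vertex $v$ with $G[N[v]]$ perfect and $N[v]$ meeting every maximum clique, but you offer no argument for either requirement; the pendant--triangle facts you cite do not by themselves control imperfection inside $N(v)$. (There is also a loose end in your induction: proper induced subgraphs with clique number $2$ are not covered by the induction hypothesis, and ``a short separate inspection'' is not one --- the Gr\"otzsch graph shows that triangle-free graphs in the class need not be perfectly divisible.)

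The paper runs the partition the other way around. Take $v$ of \emph{maximum degree}, set $B:=N(v)$ (so $\omega(G[B])<\omega(G)$ is automatic) and $A:=M(v)\cup\{v\}$, and prove that $G[M(v)]$ is perfect. If not, the Strong Perfect Graph Theorem produces a $C_5$, a $C_7$, or an odd antihole of length at least $7$ inside $M(v)$; a short lemma (bull-freeness together with Corollary~\ref{CORO1}) forces every $x\in N(v)$ to have at most two, necessarily nonadjacent, neighbours on such an antihole. The hypothesis $\omega(G)\ge 3$ then supplies adjacent $x,y\in N(v)$, which are shown to have identical trace on the cycle, and the maximum-degree choice of $v$ manufactures further vertices $z,z'\in N(v)$ with prescribed non-adjacencies to the cycle; a short search locates an induced $P_2\cup P_4$ in each of the three cases. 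Your cograph reformulation, though correct, plays no role.
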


Furthermore, we give a short proof of the perfect divisibility of ($P_5$, bull)-free graphs, first established by Chudnovsky and Sivaraman~\cite{CHUDPD2D}.

\begin{theorem}\cite{CHUDPD2D}\label{THM2}
    ($P_5$, bull)-free graphs are perfectly divisible.
\end{theorem}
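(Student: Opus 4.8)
Since $(P_5,\mbox{bull})$-freeness is hereditary, it is enough to produce, for each such graph $G$, a partition $(A,B)$ of $V(G)$ with $G[A]$ perfect and $\omega(G[B])<\omega(G)$; applying this to every induced subgraph yields perfect divisibility. We argue by induction on $|V(G)|$. Dealing with connected components one at a time (and putting any component of strictly smaller clique number entirely into $B$) we may assume $G$ is connected; and if $\overline G$ is disconnected then $G$ is a join $G_1+G_2$ of two smaller nonempty $(P_5,\mbox{bull})$-free graphs, whose partitions $(A_1,B_1),(A_2,B_2)$ combine via $A=A_1\cup A_2$, $B=B_1\cup B_2$ (a join of perfect graphs is perfect, and $\omega(G[B])=\omega(G_1[B_1])+\omega(G_2[B_2])\le(\omega(G_1)-1)+(\omega(G_2)-1)<\omega(G)$). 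So assume $G$ and $\overline G$ are connected, and (take $A=V(G)$ otherwise) that $G$ is imperfect.

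As $G$ is $P_5$-free it has no induced cycle of length at least $6$, hence no odd hole other than $C_5$. By the Strong Perfect Graph Theorem there are two cases. If $G$ has no induced $C_5$ then $G$ has no odd hole at all, so $G$ is $(\mbox{odd hole},\mbox{bull})$-free and is perfectly divisible by the theorem of Chudnovsky and Sivaraman~\cite{CHUDPD2D}. (Alternatively one argues directly, since a connected $C_5$-free $P_5$-free graph has a dominating clique.) So assume $G$ contains an induced $C_5$ on $v_1,\dots,v_5$.

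For this case the plan is to obtain a blow-up decomposition of $G$ around the $C_5$, along the lines of Fouquet's structure theory for bull-free graphs: after absorbing any nontrivial homogeneous set $H$ (writing $G$ as the substitution of $G[H]$ into a vertex of $G/H$, and combining the partitions of the smaller graphs $G[H]$ and $G/H$ obtained by induction --- the delicate point there being the clique-number bookkeeping when the vertex representing $H$ lands in the small part of the partition of $G/H$), one should reach the situation where $V(G)=V_1\cup\cdots\cup V_5$ with $v_i\in V_i$ (indices mod $5$), $V_i$ complete to $V_{i+1}$ and anticomplete to $V_{i+2}$. Granting this, each $G[V_i]$ is a smaller $(P_5,\mbox{bull})$-free graph, so induction gives partitions $(A_i,B_i)$ with $G[A_i]$ perfect and $\omega(G[B_i])<\omega(G[V_i])$; set $A=A_2\cup A_3\cup A_4\cup A_5$ and $B=V_1\cup B_2\cup B_3\cup B_4\cup B_5$. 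Then $G[A]$ is obtained from the path $v_2v_3v_4v_5$ by substituting the perfect graphs $G[A_2],\dots,G[A_5]$, so it is perfect; and every clique of $G[B]$ lies in some $V_i\cup V_{i+1}$, hence has size at most $\omega(G[V_1])+\omega(G[B_2])$, $\omega(G[B_i])+\omega(G[B_{i+1}])$ $(2\le i\le4)$, or $\omega(G[B_5])+\omega(G[V_1])$ --- each strictly below the corresponding $\omega(G[V_i])+\omega(G[V_{i+1}])\le\omega(G)$. Thus $\omega(G[B])<\omega(G)$.

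The real content is establishing the blow-up decomposition. The natural first move is to classify, using bull- and $P_5$-freeness, the possible traces $N(v)\cap\{v_1,\dots,v_5\}$ of a vertex $v\notin\{v_1,\dots,v_5\}$: they are exactly $\emptyset$, a ``diagonal'' $\{v_i,v_{i+2}\}$, a path on three consecutive $v_j$, a path on four consecutive $v_j$, or all five vertices, since every other trace produces an induced bull or $P_5$ with a suitable $C_5$-vertex. One then has to show that, with no nontrivial homogeneous set present, the $\emptyset$-trace, $P_4$-trace, and complete-trace vertices cannot occur (intuitively, their presence tends to make $\{v_1,\dots,v_5\}$ or a closely related set homogeneous), so that all remaining vertices split into the five classes $V_i$ according to their diagonal/$P_3$ trace. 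This propagation step, together with the substitution bookkeeping mentioned above, is where I expect essentially all the difficulty to lie; the combinatorial glue in the previous paragraph is then routine.
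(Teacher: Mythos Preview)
Your proposal is an outline rather than a proof: you explicitly leave the two hardest steps unproved. First, the ``substitution bookkeeping'' for homogeneous sets is not just delicate, it is the crux --- you need that an MNPD graph (or, in your inductive phrasing, a minimal counterexample) has no homogeneous set, and without Lemma~\ref{NOHOM} or an equivalent argument you cannot reduce to the prime case. Second, and more seriously, you never prove the structural claim that a prime $(P_5,\mbox{bull})$-free graph containing a $C_5$ is a $C_5$ blow-up; your trace classification is correct, but the ``propagation step'' ruling out the $\emptyset$-, $P_4$-, and complete traces is exactly where the work lies, and you only gesture at it. Note too that a genuine $C_5$ blow-up with $|V_i|\ge 2$ already \emph{has} a homogeneous set, so your endgame really says that the prime case is $C_5$ itself --- which is fine, but then virtually the entire argument is hidden in the homogeneous-set reduction. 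Finally, your no-$C_5$ branch invokes the (odd hole, bull)-free theorem from the same paper~\cite{CHUDPD2D}, so even on its own terms your argument is a reduction to a companion result rather than a self-contained proof.

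The paper's proof is entirely different and much shorter. It works in the MNPD framework: assume $G$ is $(P_5,\mbox{bull})$-free and MNPD; then for every vertex $v$ the non-neighbourhood $G[M(v)]$ is imperfect and hence, being $P_5$-free, contains an odd \emph{antihole} $X$ (using $C_5=\overline{C_5}$). Connectivity gives some $x\in N(v)$ with a neighbour in $X$. The single structural lemma needed is Lemma~\ref{LEM}: because $G$ is bull-free and MNPD (so $X$ cannot acquire a center by Corollary~\ref{CORO1}), any $x\in N(v)$ has at most two neighbours in $X$, and if two they are nonadjacent in $X$ (i.e.\ consecutive in the antihole labelling). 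In either case $v\!-\!x\!-\!v_i\!-\!v_{i+2}\!-\!v_{i-1}$ is an induced $P_5$, a contradiction. No blow-up decomposition, no trace analysis, and no appeal to the odd-hole theorem are needed.
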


\section{Preliminaries}

We set up the notation that will be used as follows: For a graph $G$ and a vertex $v\in V(G)$, the \emph{neighbourhood} of $v$, denoted by $N_G(v)$, is the set of all vertices adjacent to $v$, and the \emph{closed neighbourhood} $N_G[v]$ is defined as $N_G(v)\cup \{v\}$. They can be simplified to $N(v)$ and $N[v]$ when the graph $G$ is clear from context. Let $M_G(v)$ (or M(v)) be $V(G)\backslash N_G(v)$.

Let $A,B\subseteq V(G)$ be two disjoint sets of vertices. We say that $A$ is \emph{complete} to $B$ if every vertex of $A$ is adjacent to every vertex of $B$. Similarly,  $A$ is said to be \emph{anticomplete} to $B$ if every vertex of $A$ is nonadjacent to every vertex of $B$. A vertex set $X\subseteq V(G)$ is a \emph{homogeneous set} if $1<|X|<|V(G)|$ and every vertex in $V(G)\backslash X$ is either complete or anticomplete to $X$. A graph $G$ is \emph{minimally non-perfectly divisible} (MNPD for short) if $G$ is not perfectly divisible but each of its proper induced subgraph is. Hu, Xu and Zhang proposed that MNPD graphs admit no homogeneous set in 2025.

\begin{lemma}\cite{XUMNPD}\label{NOHOM}
    No MNPD graphs have a homogeneous set.
\end{lemma}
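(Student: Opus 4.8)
The plan is to argue by contradiction: suppose $G$ is MNPD and has a homogeneous set $X$. Since $G$ is MNPD, every \emph{proper} induced subgraph of $G$ is perfectly divisible, so it suffices to exhibit a single partition $V(G)=A\cup B$ with $G[A]$ perfect and $\omega(G[B])<\omega(G)$; producing it contradicts the assumption that $G$ itself is not perfectly divisible. Write $N$ for the set of vertices complete to $X$ and $M$ for the set anticomplete to $X$, so $V(G)\setminus X=N\cup M$. Let $G^{*}$ be the quotient obtained by contracting $X$ to a single vertex $x$ adjacent to exactly $N$. For any $w\in X$, homogeneity gives $N_G(w)\setminus X=N$, so $G^{*}\cong G[(V\setminus X)\cup\{w\}]$, which is a proper induced subgraph since $|X|\ge 2$; hence both $G^{*}$ and $G[X]$ are perfectly divisible. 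I will build $(A,B)$ by combining good partitions of these two graphs. First I record the clique arithmetic: any clique of $G$ meeting $X$ is the join of a clique of $G[X]$ with a clique of $G[N]$, so $\omega(G)=\max\bigl(\omega(G-X),\,\omega(G[X])+\omega(G[N])\bigr)$; moreover by Lemma~\ref{CLIQUE} every vertex of $X$ lies in a maximum clique, which forces every maximum clique to meet $X$ and hence $\omega(G)=\omega(G[X])+\omega(G[N])$.

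\textbf{The easy cases.} Take good partitions $(A^{*},B^{*})$ of $G^{*}$ and $(P,Q)$ of $G[X]$, so $G^{*}[A^{*}]$ and $G[P]$ are perfect, $\omega(G^{*}[B^{*}])<\omega(G^{*})$, and $\omega(G[Q])<\omega(G[X])$. If $x\in A^{*}$, set $A=(A^{*}\setminus\{x\})\cup P$ and $B=B^{*}\cup Q$. Then $G[A]$ is exactly the graph obtained by substituting the perfect graph $G[P]$ into the vertex $x$ of the perfect graph $G^{*}[A^{*}]$, hence perfect by the classical fact that substituting a perfect graph into a vertex of a perfect graph yields a perfect graph. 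A routine count then bounds $\omega(G[B])$: a clique of $G[B]$ avoiding $Q$ lies in $G^{*}[B^{*}]$, while one meeting $Q$ is the join of a clique of $G[Q]$ with a clique of $N\cap B^{*}\subseteq N$, where $\omega(G[N\cap B^{*}])\le\omega(G[N])$; both give size at most $\omega(G[X])+\omega(G[N])-1<\omega(G)$. I would also dispose of two degenerate cases directly: if $N=\emptyset$ then $G$ is the disjoint union $G[X]\cup(G-X)$ and the two good partitions combine (a disjoint union of perfect graphs is perfect); and if $M=\emptyset$ then $\omega(G-X)=\omega(G[N])$, so placing all of $X$ on the $B$-side already works.

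\textbf{The main obstacle.} The remaining case is where $x$ is forced onto the non-perfect side $B^{*}$ while $N,M\neq\emptyset$ and $\omega(G[X])\ge 2$; here Lemma~\ref{CLIQUE} applied to a vertex of $M$ gives $\omega(G-X)=\omega(G)$. This is where the difficulty concentrates. The perfect side $A$ can no longer simply absorb $X$: adjoining $X$-vertices to a perfect set $A^{*}\subseteq V\setminus X$ may create an odd hole across the complete join between $X$ and $N$, yet the naive alternative of dumping all of $X$ into $B$ overshoots, yielding a clique of size $\omega(G[X])+\omega(G[N\cap B^{*}])$ that can reach $\omega(G)$. My plan is to decompose instead through the proper induced subgraph $G[X\cup N]$ — a join of $G[X]$ and $G[N]$, hence perfectly divisible with clique number $\omega(G[X])+\omega(G[N])=\omega(G)$ — to obtain a perfect set hitting every maximum clique that meets $X$, and to handle the maximum cliques avoiding $X$ through good partitions of $G-X$ and of $G[N]$. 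The hard part, and the step I expect to be the real obstacle, is reconciling these partitions on the shared set $N$ while certifying that their union stays perfect: unions of perfect sets need not be perfect, and the only mechanism available for gluing an $X$-part to an $N$-part across their complete join without destroying perfection is the substitution lemma, which demands that the glued $X$-part sit over a single quotient vertex lying on the perfect side. I would therefore fix one coherent partition of $N$, realise the perfect side as a substitution over that partition, and then verify — using Lemma~\ref{CLIQUE}, and Lemma~\ref{SIMPLICIAL} if needed — that this single perfect set simultaneously meets every maximum clique of $G$, delivering the required contradiction.
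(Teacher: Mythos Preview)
The paper does not prove this lemma; it is quoted from \cite{XUMNPD}, so there is no in-paper argument to compare against. Your easy cases ($x\in A^{*}$, and the degenerate $N=\emptyset$ or $M=\emptyset$) are essentially correct, modulo one slip of phrasing: Lemma~\ref{CLIQUE} says every vertex lies in \emph{some} maximum clique, not that every maximum clique meets $X$; the equality $\omega(G)=\omega(G[X])+\omega(G[N])$ nevertheless holds because at least one maximum clique meets $X$ and therefore lies in $X\cup N$, while the reverse inequality is immediate.

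The genuine gap is the case $x\in B^{*}$, which you leave as a plan rather than an argument, and the plan does not close. With only an unweighted good partition of $G^{*}$ you have no control over $\omega(G[N\cap B^{*}])$: it is entirely consistent to have $N\subseteq B^{*}$ while $\omega(G^{*}[B^{*}])<\omega(G^{*})$, and then substituting $X$ (or any part of it containing an edge) for $x$ on the $B$-side already produces a clique of size $\omega(G[X])+\omega(G[N])=\omega(G)$. Your proposed remedy --- gluing a perfect piece of $X\cup N$ to a perfect piece of $N\cup M$ along a ``coherent'' partition of $N$ --- offers no mechanism for keeping the union perfect except substitution over a single quotient vertex on the perfect side, which is precisely the hypothesis $x\in A^{*}$ you no longer have. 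Lemmas~\ref{CLIQUE} and~\ref{SIMPLICIAL} do not supply the missing control over $N\cap B^{*}$.

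The proof in the cited source resolves exactly this obstruction by a different device: it shows that perfect divisibility coincides with \emph{perfect weighted divisibility}. Granting that equivalence, assign the quotient vertex $x$ weight $\omega(G[X])$ and every other vertex weight $1$; a weighted good partition $(A^{*},B^{*})$ of the perfectly divisible graph $G^{*}$ then has weighted clique number of $B^{*}$ strictly below $\omega(G)$, so in the troublesome case $x\in B^{*}$ one reads off $\omega(G[X])+\omega(G[N\cap B^{*}])<\omega(G)$ directly, and taking $A=A^{*}$, $B=(B^{*}\setminus\{x\})\cup X$ finishes. That weighted equivalence is the nontrivial input your proposal is missing; without it, the $x\in B^{*}$ case does not yield to substitution bookkeeping alone.
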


Let $H$ be an induced subgraph of $G$, and let $v \in V(G) \backslash V(H)$. The vertex $v$ is called a \emph{center} (respectively, \emph{anticenter}) of $H$ if it is complete (respectively, anticomplete) to $V(H)$. For a bull-free graph, Chudnovsky and Safra~\cite{CHUDHOM} proved the following result.

\begin{lemma}\cite{CHUDHOM}\label{BULLHOM}
If a bull-free graph $G$ contains an odd hole or an odd antihole with a center and an anticenter, then $G$ contains a homogeneous set.
\end{lemma}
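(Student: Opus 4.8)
The plan is to reduce the odd antihole case to the odd hole case and then to build an explicit homogeneous set out of the center, the anticenter, and the odd hole. First I would dispose of the antihole case by complementation. The bull is self-complementary, so $G$ is bull-free if and only if $G^c$ is bull-free; an odd antihole of $G$ becomes an odd hole of $G^c$ on the same vertex set; a center of the antihole (complete to it in $G$) becomes an anticenter of the hole in $G^c$, and an anticenter becomes a center; finally, ``homogeneous set'' is a self-complementary notion, since interchanging complete and anticomplete preserves the defining condition. Hence a homogeneous set of $G^c$ is one of $G$, and it suffices to prove the statement when $G$ contains an \emph{odd hole} $C=v_1-\cdots-v_n-v_1$ (with $n\ge 5$ odd) having a center $c$ and an anticenter $a$.

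For the hole case I would propose the candidate homogeneous set $X$ consisting of all vertices of $G$, other than $c$ and $a$, that are adjacent to $c$ and nonadjacent to $a$; formally $X=\big(N(c)\cap M(a)\big)\setminus\{a,c\}$. Since $c$ is complete and $a$ is anticomplete to $V(C)$, we have $V(C)\subseteq X$, so $1<|X|<|V(G)|$ (as $c,a\notin X$). By construction $c$ is complete to $X$ and $a$ is anticomplete to $X$, which already settles the two outside vertices $c$ and $a$. The whole content is therefore to show that every remaining vertex $y\notin X$ — that is, every $y\notin\{a,c\}$ with $y\nsim c$ or $y\sim a$ — is complete or anticomplete to $X$.

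The engine driving this is bull-freeness played against the \emph{odd} hole. First I would classify, using the bull, the possible traces $N(u)\cap V(C)$ of a vertex $u$ on $C$: the key local fact is that if $u$ is adjacent to two consecutive hole vertices $v_i,v_{i+1}$ but to neither $v_{i-1}$ nor $v_{i+2}$, then $\{u,v_i,v_{i+1},v_{i-1},v_{i+2}\}$ induces a bull; iterating such observations pins the trace down to a short admissible list. The reason the center is useful is that for any vertex $u$ nonadjacent to $c$ meeting $C$ in a mixed way, a triangle $\{c,v_i,v_{i+1}\}$ together with $u$ as a pendant at a hole vertex and a \emph{far} hole vertex $v_j$ (which exists precisely because $n$ is odd and $c$ sees all of $C$) as a pendant at $c$ produces a bull. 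To assemble $X$ I would then assume, for contradiction, that some $y\notin X$ has both a neighbour $x_1\in X$ and a non-neighbour $x_2\in X$, and manufacture a bull from $y,x_1,x_2$, the center $c$, and the odd hole, splitting according to whether $x_1\sim x_2$ and whether $y\sim a$.

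The main obstacle I anticipate is handling vertices $y$ adjacent to the center $c$: there the ``far pendant at $c$'' trick is unavailable, because every hole vertex — indeed every vertex of $X$ — sees $c$, so no pendant attached to $c$ can be found among them. This is exactly the point at which the existence of an \emph{anticenter} is indispensable: non-neighbours of $c$, and $a$ in particular, must supply a pendant that avoids $c$, mirroring the way the center supplies far pendants for the opposite attachments. Getting the boundary of $X$ exactly right — so that no vertex outside $X$ is left mixed — while carrying this symmetric use of $c$ and $a$ through the cases $c\sim a$ and $c\nsim a$, is where the real work of the argument lies.
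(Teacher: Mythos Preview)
The paper does not prove Lemma~\ref{BULLHOM}; it is quoted from Chudnovsky and Safra~\cite{CHUDHOM} without argument, so there is no ``paper's proof'' to compare with. I therefore evaluate your sketch on its own merits.

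Your complementation step is correct and standard: the bull is self-complementary, centers and anticenters swap, and ``homogeneous set'' is invariant under complementation, so the odd-antihole case does reduce to the odd-hole case.

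The real gap is the choice of the candidate homogeneous set. The set $X=\big(N(c)\cap M(a)\big)\setminus\{a,c\}$ need \emph{not} be homogeneous in a bull-free graph, even when an odd hole with center $c$ and anticenter $a$ is present. Take $V(G)=\{v_1,\dots,v_5,c,w,a,y\}$ where $v_1\text{--}\cdots\text{--}v_5\text{--}v_1$ is a $C_5$; let $c$ and $w$ both be complete to $\{v_1,\dots,v_5\}$ with $c\sim w$; let $y$ be complete to $\{c,w,a\}$ and anticomplete to the $C_5$; and let $a$ have $y$ as its only neighbour. One checks directly that every triangle of $G$ is of the form $\{c,v_i,v_{i+1}\}$, $\{w,v_i,v_{i+1}\}$, $\{c,w,v_i\}$, or $\{c,w,y\}$, and that none of these admits pendants at two distinct vertices; hence $G$ is bull-free. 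Here $c$ is a center and $a$ an anticenter of the $C_5$, and $X=\{v_1,\dots,v_5,w\}$. But $y\notin X$ is adjacent to $w\in X$ and nonadjacent to $v_1\in X$, so $X$ is not homogeneous. (The lemma is still true for this $G$: the pair $\{c,w\}$ is a homogeneous set.)

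The failure is structural, not a matter of missing case analysis: your local ``trace on $C$'' arguments only constrain how outside vertices meet $V(C)$, not how they meet the extra vertices that $X$ may contain (like $w$ above). The Chudnovsky--Safra argument builds the homogeneous set more carefully, and you should expect to need either a maximality/closure construction starting from $V(C)$ or an appeal to the finer structure of bull-free graphs rather than a one-shot definition in terms of $N(c)$ and $M(a)$.
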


By Lemma~\ref{NOHOM} and Lemma~\ref{BULLHOM}, we can deduce that

\begin{corollary}\label{CORO1}
A bull-free MNPD graph does not contain an odd hole or an odd antihole with a center and an anticenter.
\end{corollary}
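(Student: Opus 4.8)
The plan is to derive the statement immediately by contradiction from the two lemmas quoted just above. Suppose, for the sake of contradiction, that $G$ is a bull-free MNPD graph that does contain an odd hole or an odd antihole having both a center and an anticenter. Since $G$ is bull-free, Lemma~\ref{BULLHOM} applies and guarantees that $G$ contains a homogeneous set. On the other hand, $G$ is MNPD, so Lemma~\ref{NOHOM} tells us that $G$ has no homogeneous set. These two conclusions are contradictory, hence no such odd hole or odd antihole can exist in $G$, which is exactly the assertion of the corollary.

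I do not expect any genuine obstacle here: the corollary is a pure combination of Lemma~\ref{BULLHOM} (which supplies a homogeneous set from the forbidden configuration) and Lemma~\ref{NOHOM} (which forbids homogeneous sets in MNPD graphs). The only point worth verifying is that the hypotheses of Lemma~\ref{BULLHOM} are met --- namely that $G$ is bull-free and that the odd hole or odd antihole in question has a center \emph{and} an anticenter --- but these are precisely the hypotheses assumed in the proof by contradiction, so nothing further is required. The role of this corollary in the sequel will be to let us, whenever we have produced an odd hole or odd antihole inside a bull-free MNPD graph, conclude that it cannot simultaneously admit a center and an anticenter, thereby constraining how such subgraphs sit inside $G$.
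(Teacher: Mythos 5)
Your proof is correct and is exactly the argument the paper intends: the corollary is stated as an immediate consequence of combining Lemma~\ref{BULLHOM} (the configuration forces a homogeneous set in a bull-free graph) with Lemma~\ref{NOHOM} (MNPD graphs have none). Nothing further is needed.
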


The following lemma is used several times in the sequel. 

\begin{lemma}\label{LEM}
Let $G$ be a bull-free graph either admits no homogeneous set or is MNPD. If $G$ contains an odd antihole $X$ and an anticenter $v$ of $X$, then $\forall x\in N(v)$, $|N(x)\cap V(X)|\leq 2$. Furthermore, if $|N(x)\cap V(X)|= 2$, then $N(x)\cap V(X)$ are two nonadjacent vertices.
\end{lemma}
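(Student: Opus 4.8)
The plan is to reformulate the conclusion and then force a bull. Since an odd antihole has independence number $2$, the assertion ``$|N(x)\cap V(X)|\le 2$, and the two vertices are nonadjacent when equality holds'' is equivalent to saying that the set $S:=N(x)\cap V(X)$ contains no edge of $X$. So I would assume for contradiction that $S$ does contain an edge of $X$, and exhibit a bull in $G$. Throughout I fix a labelling $X=\overline{C_k}$ with vertex set $\{0,1,\dots,k-1\}$, $k$ odd and $k\ge5$, so that each vertex $i$ of $X$ has exactly two non-neighbours in $X$, namely $i-1$ and $i+1$ (indices modulo $k$); these non-neighbour pairs form a $k$-cycle on $V(X)$. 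First I would record that $x\notin V(X)$ (because $v\sim x$ while $v$ is anticomplete to $X$), and that by Corollary~\ref{CORO1} the odd antihole $X$, having the anticentre $v$, has no centre; hence $x$ is not complete to $X$, i.e.\ $M:=V(X)\setminus S\neq\emptyset$.

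The mechanism I would use is that $v$ can serve as a pendant vertex of a bull at $x$: whenever $\{x,p,q\}$ is a triangle with $p,q\in V(X)$ and $w\in M$ is adjacent in $X$ to exactly one of $p,q$, the set $\{x,p,q,v,w\}$ induces a bull. The step I expect to be the main obstacle is that the edge of $X$ and the vertex $w$ must be chosen \emph{adaptively} --- committing to a fixed edge $ab\subseteq S$ at the outset does not work --- and the right choice is guided by a ``boundary'' between $S$ and $M$ along the non-neighbour cycle of $X$. Since $S$ and $M$ are both nonempty, there is an index $m$ with $m\in M$ and $m+1\in S$, and the key claim to establish is: \emph{either $G$ contains a bull, or $S\subseteq\{m-1,m+1,m+2\}$}. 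To prove the claim: if some $q\in S$ is adjacent in $X$ to $m+1$ with $q\neq m-1$, then $q\notin\{m-1,m,m+1,m+2\}$, the vertices $x,m+1,q$ form a triangle, $m$ is adjacent in $X$ to $q$ but not to $m+1$, and $x\not\sim m$ because $m\notin S$, so $\{x,m+1,q,v,m\}$ is a bull; and if no such $q$ exists, then every element of $S$ lies in $\{m-1\}\cup\{m,m+1,m+2\}$, so $S\subseteq\{m-1,m+1,m+2\}$ since $m\notin S$.

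To finish I would apply the claim twice. Assuming no bull appears, the first application gives $S\subseteq\{m-1,m+1,m+2\}$; the only edges of $X$ inside this triple are $(m-1)(m+1)$ and $(m-1)(m+2)$, so the hypothesis that $S$ contains an edge of $X$ forces $m-1\in S$. For odd $k\ge5$ one has $m-2\notin\{m-1,m+1,m+2\}$, so $m-2\in M$ and $(m-2,m-1)$ is again a boundary pair; the second application of the claim yields $S\subseteq\{m-3,m-1,m\}$, hence $S\subseteq\{m-3,m-1\}$ since $m\notin S$. This contradicts $m+1\in S$, because $m+1\notin\{m-3,m-1\}$ when $k$ is odd and $k\ge5$. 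Hence $S$ contains no edge of $X$, which is exactly the statement of the lemma. The only point needing care is that all the indices occurring above are pairwise distinct modulo $k$, which holds because $k$ is odd and at least $5$ (so $k$ divides none of $2,3,4$); in particular the borderline case $k=5$ requires no separate treatment.
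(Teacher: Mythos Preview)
Your proof is correct. Both your argument and the paper's rest on the same bull configuration---a triangle $\{x,p,q\}$ with $p,q\in S$ adjacent in $X$, pendant $v$ at $x$, and a pendant $w\in V(X)\setminus S$ at one of $p,q$---and both invoke Corollary~\ref{CORO1} to exclude $x$ from being a centre of $X$. The paper uses this bull to \emph{propagate}: starting from an edge $v_1v_k\subseteq S$ it forces $v_{k+1},v_{k+2},\dots\in S$, then $v_{k-1},v_{k-2},\dots\in S$, and finally $v_2,v_n\in S$, concluding $S=V(X)$ and contradicting Corollary~\ref{CORO1} at the end. You instead invoke Corollary~\ref{CORO1} at the outset to get $M\neq\emptyset$, locate a boundary pair on the non-neighbour cycle, and apply your containment claim twice to pin $S$ into incompatible three-element windows. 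The two routes are equally short; the paper's propagation is marginally more direct, while your boundary framing makes the role of the non-neighbour cycle explicit and avoids the separate endpoint checks for $v_2$ and $v_n$.
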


\begin{proof}
    Let $V(X)=\{v_1, v_2, \ldots, v_n\}$, and $v_i\sim v_j$ if and only if $|i-j|\neq1$ (indices are modulo $n$).

\begin{claim}\label{CLAIML1}
If $x\sim v_i,v_j$ such that $v_i\sim v_j$, then $x$ is complete to $X$.   
\end{claim}

Suppose that $x \sim v_1, v_k$ for some integer $k$ with $3 \leq k \leq n-1$. If $k \leq n-2$, then $x \sim v_{k+1}$; otherwise, the subgraph $G[\{v, x, v_1, v_k, v_{k+1}\}]$ is a bull. Similarly, if $k+1 \leq n-2$, then $x \sim v_{k+2}$, as otherwise $G[\{v, x, v_1, v_{k+1}, v_{k+2}\}]$ is a bull. Repeating this argument, it follows that $x$ is complete to $\{v_k, \ldots, v_{n-1}\}$.

On the other hand, if $k \geq 4$, then $x \sim v_{k-1}$; otherwise, $G[\{v, x, v_1, v_k, v_{k-1}\}]$ would be a bull. If $k-1 \geq 4$, then $x \sim v_{k-2}$ since $G\{[v, x, v_1, v_{k-1}, v_{k-2}]\}$ cannot be a bull. Proceeding inductively, we conclude that $x$ is complete to $\{v_3, \ldots, v_{k}\}$.

Now we consider the two induced subgraphs $G[\{v, x, v_1, v_{n-1}, v_2\}]$ and $G[\{v, x, v_1, v_3, v_n\}]$. Since each of them cannot be a bull, we conclude that $x\sim v_2, v_n$. This proves Claim \ref{CLAIML1}.

Assume for contradiction that $|N(x)\cap V(X)|\geq 3$, or $N(x)\cap V(X)$ are exactly two adjacent vertices. Then, by Claim~\ref{CLAIML1}, $x$ is complete to $X$. This makes $G$ contain an odd antihole $X$ with a center $x$ and an anticenter $v$. By Lemma~\ref{BULLHOM} and Corollary~\ref{CORO1}, $G$ contains a homogeneous set and cannot be an MNPD graph, a contradiction. This proves Lemma \ref{LEM}.
\end{proof}

\section{The Structure of ($P_2 \cup P_4$, bull)-Free Graphs}

Notice that a $(P_2\cup P_4,\mbox{bull})$-free graph $G$ with $\omega(G)\geq3$ may not be perfectly divisible in general. In fact, $G$ cannot be perfectly divisible if $G$ contains a Gr\"otzsch graph (see Figure 2.) since Gr\"otzsch graph admits no perfect division. Let $H$ be the Gr\"otzsch graph in $G$. If $H$ has a perfect division, then we can partition $V(H)$ into two sets $A$ and $B$ such that $H[A]$ is perfect and $\omega(H[B])<\omega(H)$. It is well known that $\chi(H)=4$ and $\omega(H)=2$. Thus, $H[A]$ is 2-colourable since it is perfect, and $H[B]$ is 1-colourable by $\omega(H[B])<\omega(H)$. Hence $\chi(H)\leq 3$, a contradiction.

\input{gro.tpx}

\begin{proof}[\textnormal{\textbf{Proof of Theorem \ref{THM1}}}]

Let $G$ be a ($P_2 \cup P_4$, bull)-free graph with $\omega(G) \geq 3$. Assume that $G$ contains no homogeneous set. Then $G$ is connected; otherwise, there exists a component that is a homogeneous set. It suffices to prove that there is a vertex $v\in V(G)$ such that $G[M(v)]$ is perfect, for then $\{v\}\cup M(v)$ and $N(v)$ form a perfect division of $G$. Let $A\coloneq \{u\in V(G)| N(u) \mbox{ contains a clique on } \omega(G) -1 \mbox{ vertices} \}$, and let $v$ be a vertex of maximum degree in $A$. We will prove that $G[M(v)]$ is  perfect.

For the sake of contradiction, suppose $G[M(v)]$ is not perfect. By the Strong Perfect Graph Theorem~\cite{SPGT}, $G[M(v)]$ contains at least one of the following: an odd antihole of length at least seven, a $C_5$, or a $C_7$.

\textbf{Case 1.} $G[M(v)]$ contains an odd antihole $X$ of length at least seven.

Let $V(X)=\{v_1, v_2, \ldots, v_n\}$, and $v_i\sim v_j$ if and only if $|i-j|\neq1$ (indices are modulo $n$). $N(v)\neq \emptyset$ since $G$ is connected. Let $x\in N(v)$, $|N(x)\cap X|\leq 2$ by Lemma \ref{LEM}. If $N(x)\cap V(X)= \emptyset$, then for any $v_i\in V(X)$, $G[\{v,x,v_i,v_{i+2},v_{i-1},v_{i+1}\}]$ is a $P_2\cup P_4$ , a contradiction. If $N(x)\cap V(X) = \{v_i\}$, then $G[\{v,x,v_{i+2},v_{i+4},v_{i+1},v_{i+3}\}]$ is a $P_2\cup P_4$, a contradiction. By Lemma \ref{LEM}, we may assume that $N(x)\cap V(X) = \{v_i, v_{i+1}\}$. But now $G[\{v,x,v_{i+3},v_{i+5},v_{i+2},v_{i+4}\}]$ is a $P_2\cup P_4$, a contradiction.

\textbf{Case 2.} $G[M(v)]$ contains a $C_5$.

Let $V(C_5)=\{v_1, v_2, v_3, v_4, v_5\}$, and $v_i\sim v_j$ if and only if $|i-j|=1$ (indices are modulo $5$).
Since $C_5$ is an odd antihole of length 5, it follows that $|N(x)\cap V(C_5)|\leq 2$. If $N(x)\cap V(C_5)=\emptyset$. then $G[\{v,x,v_1,v_2,v_3,v_4\}]$ is a $P_2\cup P_4$, a contradiction. If $N(x)\cap X = \{v_i\}$, then $G[\{v,x,v_{i+1},v_{i+2},v_{i+3},v_{i+4}\}]$ is a $P_2\cup P_4$, a contradiction. Thus, each vertex in $N(v)$ has exactly two nonadjacent neighbours in $V(C_5)$ by Lemma \ref{LEM}.

\begin{claim}\label{CLAIMT1}
For any two vertices $x,y\in N(v)$, if $x\sim y$, then $N(x)\cap V(C_5) = N(y)\cap V(C_5)$.    
\end{claim}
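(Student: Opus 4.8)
The plan is a proof by contradiction. Suppose $x \sim y$ but $N(x) \cap V(C_5) \neq N(y) \cap V(C_5)$; I will exhibit an induced bull. Write $A = N(x) \cap V(C_5)$ and $B = N(y) \cap V(C_5)$. From the paragraph preceding the claim, $A$ and $B$ are each a pair of nonadjacent vertices of $C_5$ (equivalently, a pair of vertices at distance two along the cycle), and since $A \neq B$ they either share exactly one vertex or are disjoint. Throughout I would also keep in mind that $v$ is complete to $\{x,y\}$ and anticomplete to $V(C_5)$, and that $x \sim y$.

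First, the case $|A \cap B| = 1$. Up to the dihedral symmetry of $C_5$ I may take $A = \{v_1,v_3\}$ and $B = \{v_3,v_5\}$, so $v_3$ is a common neighbour of $x$ and $y$ on the cycle. Then $\{x,y,v_3\}$ is a triangle; $v_2$ is adjacent to $v_3$ but (as $v_2 \notin A \cup B$) to neither $x$ nor $y$; and $v_5$ is adjacent to $y$ but to neither $x$ (since $v_5 \notin A$) nor $v_3$; finally $v_2 \not\sim v_5$. Hence $\{x,y,v_3,v_2,v_5\}$ induces a bull — a triangle with pendant edges at $v_3$ and at $y$ — contradicting bull-freeness.

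Second, the case $A \cap B = \emptyset$. Up to symmetry I may take $A = \{v_1,v_3\}$ and $B = \{v_2,v_4\}$ (the reflection of $C_5$ fixing $v_2$ fixes $A$ and interchanges the two possibilities $\{v_2,v_4\}$ and $\{v_2,v_5\}$ for $B$). Now $\{v,x,y\}$ is a triangle; $v_1$ is adjacent to $x$ but to neither $v$ nor $y$; $v_4$ is adjacent to $y$ but to neither $v$ nor $x$; and $v_1 \not\sim v_4$ in $C_5$. So $\{v,x,y,v_1,v_4\}$ induces a bull, again a contradiction. This would complete the proof of the claim.

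Since the claim is at bottom a finite case-check, I do not anticipate a genuine obstacle; the one point needing care is the symmetry reduction (making sure the two representatives really exhaust the configurations) and, in the first case, noticing that the triangle to use is $\{x,y,v_3\}$ rather than $\{v,x,y\}$ — the latter fails because the private cycle-neighbours of $x$ and $y$ happen to be adjacent there. After that, verifying that each candidate set induces a bull is just reading off the prescribed edges and non-edges from the adjacency data, recalling that in $C_5$ two vertices are adjacent iff their indices differ by one modulo five.
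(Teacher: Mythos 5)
Your proof is correct and follows essentially the same route as the paper: a case split on whether $N(x)\cap V(C_5)$ and $N(y)\cap V(C_5)$ share one vertex or none, with an explicit induced bull in each case (indeed your bulls $\{x,y,v_3,v_2,v_5\}$ and $\{v,x,y,v_1,v_4\}$ coincide, up to the reflection you invoke, with the ones in the paper). The only difference is cosmetic: you use the dihedral symmetry of $C_5$ to treat one representative per case, where the paper writes out both subcases explicitly.
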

We first suppose $N(x)\cap X=\{v_1, v_3\}$. Consider the case where $x$ and $y$ have no common neighbours in $V(C_5)$. By Lemma \ref{LEM}, $N(y)\cap V(C_5) = \{v_2,v_4\}$ or $N(y)\cap V(C_5) = \{v_2,v_5\}$. But now $G[\{v,x,y,v_1,v_4\}]$ is a bull if  $N(y)\cap V(C_5) = \{v_2,v_4\}$, and  $G[\{v,x,y,v_3,v_5\}]$ is a bull if $N(y)\cap V(C_5) = \{v_2,v_5\}$. Now suppose $x$ and $y$ have exactly one common neighbours in $V(C_5)$, then Lemma \ref{LEM} implies that either $N(y)\cap V(C_5) = \{v_1,v_4\}$ or $N(y)\cap V(C_5) = \{v_3,v_5\}$. But now $G[\{x,y,v_1,v_2,v_4\}]$ is a bull if $N(y)\cap V(C_5) = \{v_1,v_4\}$, and $G[\{x,y,v_2,v_3,v_5\}]$ is a bull if $N(y)\cap V(C_5) = \{v_3,v_5\}$. Hence, $N(x)\cap V(C_5) = N(y)\cap V(C_5)$. This proves Claim \ref{CLAIMT1}.

Since $\omega(G) \geq 3$ and $v \in A$, there exists a clique of order $\omega(G)-1 \geq 2$ in $G[N(v)]$. Let $x$ and $y$ be vertices in this clique. By Claim~\ref{CLAIMT1}, we may assume $N(x) \cap V(C_5) = N(y) \cap V(C_5) = \{v_1, v_3\}$. Consequently, both $v_1$ and $v_3$ are in $A$.

Note that $v$ is chosen as a vertex of maximum degree in $A$, there must exist a vertex $z \in N(v)$ such that $z \nsim v_1$; otherwise, we would have $d(v_1) > d(v)$, a contradiction. By Claim \ref{CLAIMT1}, $z$ is anticomplete to $\{x,y\}$. Now suppose $z\sim v_2$. Lemma \ref{LEM} implies that either $N(z)\cap V(C_5) = \{v_2,v_4\}$ or $N(z)\cap V(C_5) = \{v_2,v_5\}$. In the first case, $\{x,y,v_2,z,v_4,v_5\}$ induces a $P_2 \cup P_4$; in the second case, so does $\{x,y,v_2,z,v_5,v_4\}$. Therefore, $z \nsim v_2$. Since $z \nsim v_1$, it follows that $N(z) \cap V(C_5) = \{v_3, v_5\}$. 

The argument in the previous paragraph further shows that if a vertex in $N(v)$ is nonadjacent to $v_1$, it must be adjacent to $v_3$. Since $v$ has maximum degree in $A$, there exists a vertex $z^\prime \in N(v)$ such that $z^\prime \nsim v_3$; otherwise $d(v_3) > d(v)$. Then Claim \ref{CLAIMT1} implies that $z^\prime$ is anticomplete to $\{x,y,z\}$. Applying a similar reasoning to $z^\prime$ as to $z$, $N(z^\prime) \cap V(C_5) = \{v_1, v_4\}$. But then $G[\{x, y, z, v_5, v_4, z^\prime\}]$ is a $P_2\cup P_4$, a contradiction.

\textbf{Case 3.} $G[M(v)]$ contains a $C_7$.

Let $V(C_7)=\{v_1, v_2, \ldots, v_7\}$, and $v_i\sim v_j$ if and only if $|i-j|=1$ (indices are modulo $7$).

\begin{claim}\label{CLAIMT2}
    $\forall x\in N(v)$, $N(x)\cap V(C_7)$ are three pairwise nonadjacent vertices.
\end{claim}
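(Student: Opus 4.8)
The plan is to establish the claim in two stages: first that $N(x)\cap V(C_7)$ contains no two consecutive vertices of the cycle, and then that it has size exactly $3$, so that, being an independent set of $C_7$, it consists of three pairwise nonadjacent vertices.

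For the first stage I would argue by bull-propagation, in the spirit of the proof of Lemma~\ref{LEM}. Suppose $x\sim v_i$ and $x\sim v_{i+1}$ for some $i$. Then $G[\{v,x,v_i,v_{i+1},v_{i+2}\}]$ is a bull unless $x\sim v_{i+2}$: the triangle is $\{x,v_i,v_{i+1}\}$, the vertex $v$ is a pendant at $x$ (here I use that $v$ is an anticenter of $C_7$), and $v_{i+2}$ would be a pendant at $v_{i+1}$ (using $v_i\nsim v_{i+2}$). So $x\sim v_{i+2}$; repeating the argument with the consecutive pair $v_{i+1},v_{i+2}$, and so on around the cycle, forces $x$ to be complete to $V(C_7)$. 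But then $C_7$ is an odd hole possessing both a center, namely $x$, and an anticenter, namely $v$, contradicting Corollary~\ref{CORO1}. Hence $N(x)\cap V(C_7)$ is an independent set of $C_7$, and in particular has at most $3$ vertices.

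For the second stage I would suppose $|N(x)\cap V(C_7)|\le 2$ and derive an induced $P_2\cup P_4$ in each case. Whenever $N(x)$ misses four consecutive vertices of $C_7$ this is routine: $\{v,x\}$ is a $P_2$ (as $v\sim x$ and $v$ is anticomplete to $C_7$), anticomplete to those four vertices, which themselves induce a $P_4$. This disposes of $N(x)\cap V(C_7)=\emptyset$ (take $v_1v_2v_3v_4$), of $N(x)\cap V(C_7)=\{v_1\}$ (take $v_3v_4v_5v_6$), and, using the first stage, of $N(x)\cap V(C_7)=\{v_1,v_3\}$ (take $v_4v_5v_6v_7$). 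The only remaining possibility is $N(x)\cap V(C_7)=\{v_1,v_4\}$, two vertices at distance $3$ on the cycle, and this is the step I expect to be the \emph{main obstacle}: no four consecutive vertices of $C_7$ avoid both $v_1$ and $v_4$, so $\{v,x\}$ can no longer serve as the $P_2$ of a path built entirely inside $C_7$. The trick is to let $x$ itself lie on the $P_4$, with the edge $xv_4$ substituting for a cycle-edge: $G[\{v_2,v_3,v_4,x\}]$ is the path $v_2-v_3-v_4-x$ (since $x\nsim v_2,v_3$ and $v_2\nsim v_4$), and it is anticomplete to the edge $G[\{v_6,v_7\}]$, which is a quick check of distances in $C_7$ together with $x\nsim v_6,v_7$. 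Thus $G[\{v_2,v_3,v_4,x,v_6,v_7\}]$ is an induced $P_2\cup P_4$, the desired contradiction. Hence $|N(x)\cap V(C_7)|=3$, which together with the first stage proves the claim.
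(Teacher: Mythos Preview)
Your proof is correct and uses essentially the same ingredients as the paper (bull-freeness together with the anticenter $v$ to control consecutive neighbours, Corollary~\ref{CORO1} to rule out a center, and $P_2\cup P_4$-freeness for the lower bound), only ordered differently: you first establish independence via propagation and then do the lower bound, whereas the paper asserts $|N(x)\cap V(C_7)|\ge 3$ first and then rules out $4$--$7$. Your explicit treatment of the $\{v_1,v_4\}$ case is in fact more careful than the paper, which glosses over this step.
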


Let $x\in N(v)$. Then $G$ is $P_2\cup P_4$-free implies that $|N(v)\cap V(C_7)|\geq 3$. If $4\leq |N(v)\cap V(C_7)|\leq 6$, there must exist an integer $i\in \{1,2,\ldots,7\}$ such that $x$ is complete to $\{v_i,v_{i+1}\}$, and $x\nsim v_{i-1}$ or $x\nsim v_{i+2}$. In the first case,  $\{v,x,v_{i-1},v_i, v_{i+1}\}$ induces a bull; in the second case, so does $\{v, x, v_i,v_{i+1},v_{i+2}\}$. If $|N(v)\cap V(C_7)| = 7$, i.e. $v$ is complete to $V(C_7)$. Then $x$ is a center of $C_7$ and $v$ is an anticenter of $C_7$. By Lemma~\ref{BULLHOM}, $G$ contains a homogeneous set, a contradiction. Thus, $|N(v)\cap V(C_7)|=3$. By the previous discussion in the case of $4\leq |N(v)\cap V(C_7)|\leq 6$, $x$ cannot be complete to two adjacent vertices in $V(C_7)$. Hence, we can deduce that $N(x)\cap V(C_7)$ consists of three pairwise nonadjacent vertices.

\begin{claim}\label{CLAIMT3}
For any two vertices $x,y\in N(v)$, if $x\sim y$, then $N(x)\cap V(C_7) = N(y)\cap V(C_7)$.    
\end{claim}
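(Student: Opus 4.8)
The proof should run parallel to that of Claim \ref{CLAIMT1} in Case 2: by Claim \ref{CLAIMT2}, both $N(x)\cap V(C_7)$ and $N(y)\cap V(C_7)$ are $3$-element independent sets in $C_7$, and up to rotation there is essentially only one such set, namely $\{v_i,v_{i+2},v_{i+4}\}$ for some $i$ (the three independent triples in $C_7$ are exactly the rotations of $\{v_1,v_3,v_5\}$). So I would fix $N(x)\cap V(C_7)=\{v_1,v_3,v_5\}$ and let $N(y)\cap V(C_7)$ be one of the seven rotations $\{v_j,v_{j+2},v_{j+4}\}$, then show that every choice other than $j=1$ produces a forbidden induced subgraph — a bull or a $P_2\cup P_4$ — using the edge $xy$ and the vertex $v$ (which is anticomplete to $V(C_7)$).

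The mechanics: for each rotation I would tabulate $A=N(x)\cap V(C_7)$ and $B=N(y)\cap V(C_7)$, hence also $A\setminus B$, $B\setminus A$, $A\cap B$. If $A\cap B\neq\emptyset$ but $A\neq B$, pick $w\in A\cap B$, $a\in A\setminus B$, $b\in B\setminus A$; then $\{x,y,w,a,b\}$ together with judicious use of $v$ tends to give a bull (as in Claim \ref{CLAIMT1}, e.g. $x$-$y$ edge, $x\sim a$, $y\sim b$, and the triangle-plus-pendants structure falls out once one checks the $C_7$-adjacencies among $w,a,b$). If $A\cap B=\emptyset$, then $|A\cup B|=6$, missing exactly one vertex $v_k$ of $C_7$; here I expect a $P_2\cup P_4$ built from $\{v,x\}$ (the $P_2$, since $v\sim x$, $v$ anticomplete to $C_7$, and — crucially — $x\nsim y$ is false, so I must instead take the $P_2$ among $C_7$-vertices not seen by $x$) — more carefully, one uses that $x$ misses $v_2,v_4,v_6,v_7$ and $y$ misses four vertices too, and among the eight "missed" slots there is an edge of $C_7$ anticomplete to $\{x,y\}$... wait, that only works if the missed sets overlap in an edge; since $A\cup B$ omits a single vertex $v_k$, the set $\{v_{k-1},v_k,v_{k+1}\}$ has $v_k\notin A\cup B$ while $v_{k\pm1}\in A\cup B$, so one of $x,y$ sees $v_{k-1}$ and one sees $v_{k+1}$; then $\{v,\text{(that one of }x,y),v_k,v_{k+2}\text{-type path}\}$ assembles a $P_2\cup P_4$. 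I would just verify the two or three genuinely distinct rotation classes ($j=2$, $j=3$, $j=4$; the rest are symmetric) and record the witnessing subgraph for each.

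The main obstacle — really the only one — is bookkeeping: making sure that for each of the few inequivalent rotations the claimed five- or six-vertex set is genuinely \emph{induced} as a bull or $P_2\cup P_4$, i.e. checking all $\binom{5}{2}$ or $\binom{6}{2}$ adjacencies against (i) the $C_7$-structure, (ii) Claim \ref{CLAIMT2} for $x$ and for $y$, and (iii) $v$ being anticomplete to $V(C_7)$ and adjacent to both $x,y$, and $x\sim y$. I do not anticipate any conceptual difficulty; the symmetry of $C_7$ cuts the casework down to two or three lines, exactly as Claim \ref{CLAIMT1} handled $C_5$.
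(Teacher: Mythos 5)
Your outline coincides with the paper's: by Claim \ref{CLAIMT2} both $N(x)\cap V(C_7)$ and $N(y)\cap V(C_7)$ are independent triples of $C_7$, every such triple is a rotation of $\{v_1,v_3,v_5\}$ (there are seven of them, not three), and after fixing $N(x)\cap V(C_7)=\{v_1,v_3,v_5\}$ the reflection preserving this set cuts the bad rotations to three classes, each of which really does contain a forbidden induced subgraph. So the framework is sound. The gap is in the witnesses: the two mechanisms you actually commit to fail in two of your three classes, and you verify none of them. For $N(y)\cap V(C_7)=\{v_3,v_5,v_7\}$ (mirror: $\{v_1,v_3,v_6\}$) your recipe ``take $w\in A\cap B$, $a\in A\setminus B$, $b\in B\setminus A$ and read off a bull on $\{x,y,w,a,b\}$'' cannot work: here $A\setminus B=\{v_1\}$, $B\setminus A=\{v_7\}$ and $v_1\sim v_7$, so every such five-set is a $C_5$ with a chord, not a bull; nor can ``judicious use of $v$'' repair it, since $v$ is adjacent to both $x$ and $y$ and to nothing else in the configuration, so a bull on the triangle $\{v,x,y\}$ would again need the adjacent pair $v_1,v_7$ as pendants. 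The correct witness hangs the second pendant off the shared $C_7$-vertex of the triangle, e.g.\ $\{x,y,v_3,v_1,v_4\}$ (the paper's $\{x,y,v_2,v_3,v_6\}$ in the mirrored case).

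Similarly, in the disjoint class $N(y)\cap V(C_7)=\{v_2,v_4,v_6\}$ the assembly you sketch around the missed vertex $v_7$ does not close up: the induced path $v-x-v_1-v_7$ (or $v-y-v_6-v_7$) leaves only the single vertex $v_4$ (respectively $v_3$) nonadjacent to it, so no anticomplete edge is available for the $P_2$. The immediate witness there is simply the bull $\{v,x,y,v_1,v_6\}$, which is what the paper uses (a $P_2\cup P_4$ does exist too, e.g.\ $\{v_1,v_7\}$ together with $v-y-v_4-v_3$, but not the one you describe). Only the class $N(y)\cap V(C_7)=\{v_1,v_4,v_6\}$ behaves as your first recipe predicts, and even there only for the right choice of $a,b$. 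Since the decisive case verification is exactly the part you defer, and the heuristics you offer for it break where it matters, the proposal as written is not yet a proof, though it is completable along the paper's lines.
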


By Claim \ref{CLAIMT2}, without lost of generality, $N(x)\cap V(C_7) = \{v_1, v_3, v_5\}$. We first show that $y$ is anticomplete to $\{v_2, v_4\}$. It suffices to prove that $y \nsim v_2$, as the conclusion $y \nsim v_4$ then follows by symmetry. Suppose, to the contrary, that $y \sim v_2$. Then, by Claim~\ref{CLAIMT2}, we have $N(y) \cap V(C_7) = \{v_2, v_4, v_6\}$, $\{v_2, v_4, v_7\}$, or $\{v_2, v_5, v_7\}$. In the first case, $\{v, x, y, v_1, v_6\}$ induces a bull; in the second case, $\{v, x, y, v_5, v_7\}$ induces a bull; in the last case, $\{v, x, y, v_3, v_7\}$ induces a bull. Thus, we conclude that $y$ is anticomplete to $\{v_2, v_4\}$. Next, we show that $y$ is anticomplete to $\{v_6, v_7\}$. As before, it suffices to prove that $y \nsim v_6$, since the conclusion $y \nsim v_7$ then follows by symmetry. Assume, for contradiction, that $y \sim v_6$. Then, by Claim~\ref{CLAIMT2} and the fact that $y$ is anticomplete to $\{v_2, v_4\}$, it follows that $N(y) \cap V(C_7) = \{v_1, v_3, v_6\}$. But now $\{x, y, v_2, v_3, v_6\}$ induces a bull. Hence, $N(y) \cap V(C_7) = \{v_1, v_3, v_5\}$, completing the proof of Claim~\ref{CLAIMT3}.

Since $\omega(G) \geq 3$ and $v \in A$, there exists a clique of order $\omega(G)-1 \geq 2$ in $G[N(v)]$. Let $x$ and $y$ be vertices in this clique. By Claim~\ref{CLAIMT1}, we may assume $N(x) \cap V(C_7) = N(y) \cap V(C_7) = \{v_1, v_3, v_5\}$. Consequently, $v_1$, $v_3$ and $v_5$ are all in $A$.

Note that $v$ is chosen as a vertex of maximum degree in $A$, there must exist a vertex $z \in N(v)$ such that $z \nsim v_1$; otherwise, $d(v_1) > d(v)$, a contradiction. By Claim \ref{CLAIMT3}, $z$ is anticomplete to $\{x,y\}$. If $z \sim v_2$, then by Claim~\ref{CLAIMT3}, $N(z) \cap V(C_7)$ must equal to one of the following: $\{v_2, v_4, v_6\}$, $\{v_2, v_4, v_7\}$, or $\{v_2, v_5, v_7\}$. In the first case, $\{x, y, v_2, z, v_6, v_7\}$ induces a $P_2 \cup P_4$; in the latter two cases, $\{x, y, v_2, z, v_7, v_6\}$ induces a $P_2 \cup P_4$. Consequently, $z \nsim v_2$. Combined with $z \nsim v_1$, we conclude $N(z) \cap V(C_7) = \{v_3, v_5, v_7\}$.

The argument in the previous paragraph further shows that any vertex in $N(v)$ nonadjacent to $v_1$ must be adjacent to $v_5$. Similarly, since $v$ is chosen as a vertex of maximum degree in $A$, there exists a vertex $z^\prime \in N(v)$ such that $z^\prime \nsim v_5$. By Claim~\ref{CLAIMT3}, $z^\prime$ is anticomplete to $\{x,y,z\}$. By a similar analysis of $z^\prime$ as of $z$, we can deduce that $N(z^\prime) \cap V(C_7) = \{v_1, v_3, v_6\}$. But $G[\{x, y, z, v_7, v_6, z^\prime\}]$ is a $P_2\cup P_4$, a contradiction.

All three cases lead to a contradiction; this completes the proof of Theorem \ref{THM1}.
\end{proof}

\section{Perfect Divisibility of ($P_5$, bull)-Free Graphs}

In this section, we provide a new proof of Theorem \ref{THM2}.

\begin{proof}[\textnormal{\textbf{Proof of Theorem \ref{THM2}}}]

Seeking a contradiction, suppose $G$ is a ($P_5$, bull)-free MNPD graph. By minimality, $G$ is connected. $G[M(v)]$ cannot be perfect; otherwise, $G[M(v) \cup \{v\}]$ would be perfect and $\omega(G[N(v)]) < \omega(G)$, implying that $G$ is perfectly divisible, a contradiction. Consequently, by the Strong Perfect Graph Theorem~\cite{SPGT}, $G[M(v)]$ contains an odd antihole $X$. Since $G$ is connected and $v$ can be arbitrary, we may assume that $v$ has a neighbour $x$ such that $x$ has a neighbour in $V(X)$.

Let $V(X)=\{v_1,\ldots,v_{n}\}$, and $v_i\sim v_j$ if and only if $|i-j|\neq1$ (indices are modulo $n$). By Lemma \ref{LEM}, we may suppose that $N(x)\cap X = \{v_i\}$ or $N(x)\cap X=\{v_i, v_{i+1}\}$. Note that in both cases, $v-x-v_{i}-v_{i+2}-v_{i-1}$ is a $P_5$, a contradiction. This completes the proof of Theorem \ref{THM2}.
\end{proof}

\end{document}